\newtheorem{theorem}{Theorem}[section]
\newtheorem{lemma}[theorem]{Lemma}
\newtheorem{cor}[theorem]{Corollary}
\newtheorem{prop}[theorem]{Proposition}
\newtheorem{definitiontemp}[theorem]{Definition}
\newenvironment{definition}{\begin{definitiontemp}
\normalfont}{\end{definitiontemp}}
\numberwithin{equation}{section}
\theoremstyle{remark}
\newtheorem{remark}[theorem]{Remark}
\newcommand{\act}{\ltimes}
\newcommand{\C}{\mathbb{C}}
\newcommand{\Z}{\mathbb{Z}}
\newcommand{\A}{\mathcal{A}}
\newcommand{\rep}{\text{Rep}}
\newcommand{\vect}{\text{Vec}}
\newcommand{\Oo}{\mathcal{O}}
\newcommand{\Cc}{\mathcal{C}}
\newcommand{\Mm}{\mathcal{M}}
\newcommand{\Dd}{\mathcal{D}}
\newcommand{\Nn}{\mathcal{N}}
\newcommand{\Bb}{\mathcal{B}}
\newcommand{\Hom}{\text{Hom}}
\newcommand{\Mod}{\text{Mod}}
\newcommand{\Pic}{\mathcal{P}\text{ic}}
\newcommand{\Bm}{\text{\underline{BrPic}}}
\newcommand{\Bmm}{\text{\underline{\underline{BrPic}}}}
\newcommand{\Pc}{\text{\underline{\underline{Pic}}}}
\newcommand{\Att}{\text{\underline{Aut}}^{\otimes}}
\newcommand{\At}{\text{\underline{Aut}}}
\newcommand{\Ot}{\text{Out}}
\newcommand{\Ott}{\text{\underline{\underline{Out}}}}
\newcommand{\RP}{\text{REP}^\times(K)}
\newcommand{\Rp}{\text{Rep}^\times(K)}
\newcommand{\drr}{\underline{\underline{\text{Der}}}}
\newcommand{\ok}{\vect_K}
\newcommand{\Zz}{\mathcal{Z}}
\newcommand{\Bimod}{\text{Bimod}}
\title{Extensions and duality.}
\author{Ilya Shapiro}
\begin{document}
\maketitle

\begin{abstract}
For a fixed finite group $Q$ and semi-simple finite dimensional algebra $S$, we examine an equivalence between strongly $Q$-graded algebras (extensions) with identity component $S$ and $S^1$-gerbes on action groupoids of $Q$ on the set of isomorphism classes of simple objects of the category of $S$-modules. This clarifies the nature of the map considered in \cite{gerbes}.  Motivated by this and \cite{eno2} we suggest and study a notion of extensions suitable to the case when $S$ is replaced by a Hopf algebra, in the sense that there is a bijection between extensions with ``fiber" $H$ and $H^*$.  In particular we focus on the case of $H$ equal to the group algebra of a finite group.  When $K$ is abelian, the answer is particularly symmetric as duality of Hopf algebras does not take us outside of the category of groups.
\end{abstract}
\bigskip
\noindent \emph{Keywords}: Extensions, fusion categories, higher groups, duality.

\section{Introduction.}
The duality of the title is actually two very different dualities.  The first refers to the bijection between algebra extensions and $S^1$-gerbes, while the second deals with ``extensions" of Hopf algebras and their relative duality.  There is a third kind of duality (more of a symmetry) that arises out of considerations of certain group actions on pointed fusion categories, it is the very opposite of relative.  The paper is organized as follows.

In Section \ref{algebras} we generalize and refine the approach of \cite{gerbes} into an actual bijection; although our setting here is somewhat less general, it can easily be ``re-generalized". More precisely, in \cite{gerbes} a construction of an $S^1$-gerbe on a (dual) Lie groupoid is constructed from a $K$-gerbe on the original Lie groupoid.  It is then shown that the two have equivalent representation theories.  In this text, we consider instead of the original Lie groupoid, only the case of the $Q\act pt$, the groupoid quotient of a point by a finite group $Q$.  The nature of the correspondence (in \cite{gerbes}) between $K$-gerbes on the original and $S^1$-gebres on the dual is not examined.  In fact it is not hard to see that it is not a bijection.  The mystery of the correspondence or the lack thereof is solved by replacing the $K$-gerbes with something else; this is the content of Theorem \ref{bijection}.  The idea to consider $Q$-extensions of algebras, that is so obvious in retrospect, occurred to us while reading \cite{eno2}. The relationship between $Q$-extensions of an algebra ($\C K$ to be precise) and $K$-gerbes on  $Q\act pt$ (known more commonly as group $K$-extensions of $Q$) is explained in Section \ref{grp}. In Section \ref{fusioncats} we are motivated by an attempt to define a notion of $Q$-extension, of a semi-simple finite dimensional Hopf algebra $H$, that is stable under duality of Hopf algebras, in the relative sense.  What we actually obtain is a slightly different notion of a $Q$-extension of a pair $(\Cc,\Mm)$ of a fusion category and its module.  We further focus on the case of $H=\C K$ for $K$ a finite group, which in our framework corresponds to $(\rep(K),\vect)$. In Section \ref{symmetry} we look at an interesting symmetry between  $Q$ actions on $\ok$ and $K$ actions on $\vect_Q$.  In the Appendix, i.e., Section \ref{appx} we collect mostly known constructions and observations that are needed in the main body of the paper.  Our notation is not always standard and so we feel that the inclusion of the Appendix addresses this issue as well.

\subsection{Notation} Possible source of confusion: $H^i(K,A)$ denotes the $i$th cohomology group of $K$ with coefficients in an abelian group $A$ on which $K$ acts by group automorphisms. If the group $A$ is $\C^\times$ then we write simply $H^i(K)$.  On the other hand $H^i(G;K,A)$ is the relative to $K$ $i$th group cohomology of $G$ with coefficients in $A$, and again $H^i(G;K)$ means that $A=\C^\times$.

What we call an $S^1$-gerbe should be more appropriately called a $\C^\times$-gerbe or a $\C^\times$-central extension.  We owe our notation to \cite{gerbes}.

\section{The $Q$-extensions of algebras and duality}\label{algebras}
Let $S$ be a semi-simple finite dimensional algebra over $\C$.  Let $Q$ be a finite group (it can be a groupoid, nothing would change).  As usual, we say that $R$ is a $Q$-extension of $S$, if $$R=\bigoplus_{q\in Q}R_q$$ is strongly $Q$-graded and $R_1=S$.   This is equivalent to a homomorphism $$Q\rightarrow \Bm(S)$$ where the latter is the $2$-group of invertible $S$-bimodules.\footnote{We invite the reader to compare the discussion contained in this section to \cite{obstructions}.}  Note that  $\Bm(S)=\At(S\text{-mod})$. Let $I_S$ denote the set of isomorphism classes of simple $S$-modules,  and observe that $$\pi_1(\Bm(S))=Aut(I_S)$$ and if we denote by $\Oo_{I_S}$ the functions on the set $I_S$, $$\pi_2(\Bm(S))=Z(S)^\times=\Oo^\times_{I_S}.$$ Let us make the choices of $V_i\in i\in I_S$ arbitrarily, thus $$S\simeq \bigoplus_{i\in I_S}End(V_i).$$ Observe that for $\sigma\in Aut(I_S)$ we have an element $X_\sigma\in \Bm(S)$, more precisely $$X_\sigma=\bigoplus_{i\in I_S}V_{\sigma(i)}\otimes V^*_i.$$  Note that we have a canonical identification $$X_\sigma\otimes_S X_{\sigma'}\simeq X_{\sigma\sigma'}$$ and so the choices of $V_i$'s that we made split the $2$-group.  More precisely, we have:
\begin{lemma}
As $2$-groups, $$\Bm(S)\simeq T(\Oo^\times_{I_S})\rtimes Aut(I_S)$$ where $T(\Oo^\times_{I_S})$ denotes the $2$-group of $\Oo^\times_{I_S}$-torsors or equivalently $\Pic_{I_{S}}$, the $2$-group of line bundles on $I_S$ with the usual $\otimes$ product.
\end{lemma}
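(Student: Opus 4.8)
The plan is to construct an explicit monoidal equivalence $T(\Oo^\times_{I_S})\rtimes Aut(I_S)\to\Bm(S)$. The equivalence $T(\Oo^\times_{I_S})\simeq\Pic_{I_S}$ claimed in the statement is essentially a triviality — send an $\Oo^\times_{I_S}$-torsor to its associated line bundle, noting $\Oo^\times_{I_S}=\prod_{i\in I_S}\C^\times$ and that a line bundle on $I_S$ is a collection of lines $(L_i)_{i\in I_S}$ — so the real content is the identification $\Bm(S)\simeq\Pic_{I_S}\rtimes Aut(I_S)$. First I would exploit semisimplicity of $S$: with $S\simeq\bigoplus_{i\in I_S}End(V_i)$, an invertible $S$-bimodule $X$ permutes the blocks and so determines $\sigma_X\in Aut(I_S)$ (equivalently, $\Bm(S)=\At(S\text{-mod})$ and $S\text{-mod}\simeq\vect_{I_S}$, whence $X$ induces an auto-equivalence of $\vect_{I_S}$ which permutes the simple objects and on each one is tensoring by a line). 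The basic structural fact, provable by the usual computation of invertible bimodules between matrix algebras, is that every invertible $S$-bimodule is isomorphic to $L\otimes_S X_{\sigma_X}$ for a line bundle $L=(L_i)$ on $I_S$ (with $X_\sigma=\bigoplus_i V_{\sigma(i)}\otimes V_i^*$ as above, and a line bundle regarded as the central bimodule $\bigoplus_i L_i\otimes V_i\otimes V_i^*$); in particular the full sub-$2$-group of $\Bm(S)$ consisting of bimodules with trivial underlying permutation is exactly $\Pic_{I_S}$.

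The next step is to compute $\otimes_S$ on these representatives. Using the canonical pairings $V_i^*\otimes_{End(V_i)}V_i\simeq\C$ one gets
\[
(L\otimes_S X_\sigma)\otimes_S(L'\otimes_S X_{\sigma'})\ \simeq\ (L\otimes{}^{\sigma}L')\otimes_S X_{\sigma\sigma'},
\]
where ${}^{\sigma}L'$ is $L'$ with base permuted by $\sigma$; specializing gives $X_\sigma\otimes_S L\otimes_S X_\sigma^{-1}\simeq{}^{\sigma}L$, which pins down the semidirect-product action in the statement as the evident permutation action of $Aut(I_S)$ on $\Oo^\times_{I_S}=\pi_2$. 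At the same time, the canonical identifications $X_\sigma\otimes_S X_{\sigma'}\simeq X_{\sigma\sigma'}$ recorded in the text assemble $\sigma\mapsto X_\sigma$ into a (strictly) monoidal functor from the discrete $2$-group $Aut(I_S)$ to $\Bm(S)$ — a splitting of the $2$-group extension $1\to\Pic_{I_S}\to\Bm(S)\to Aut(I_S)\to1$ — precisely because those identifications are built from evaluation/coevaluation and hence cohere under triple tensor products by the snake identities for the dual pairs $(V_i,V_i^*)$. Putting the two pieces together, the assignment $(L,\sigma)\mapsto L\otimes_S X_\sigma$ is the desired functor: essentially surjective and fully faithful by the structural fact of the first paragraph, equipped with structure isomorphisms by the tensor-product computation, and inducing the identity on $\pi_1=Aut(I_S)$ and on $\pi_2=\Oo^\times_{I_S}$ as recorded before the statement.

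The part I expect to demand the most care is verifying that this is an equivalence of $2$-\emph{groups} and not merely of the underlying monoidal groupoids: one must check that the structure isomorphisms of $(L,\sigma)\mapsto L\otimes_S X_\sigma$ are compatible with associators. On the source the associator is the (essentially trivial) semidirect-product associator built from the trivial associator of $\Pic_{I_S}$ and the strictness of the $Aut(I_S)$-action, while on $\Bm(S)$ it is the associator of $\otimes_S$, and the pentagon for the comparison $2$-cells must be checked. This is where one genuinely uses that the isomorphisms $X_\sigma\otimes_S X_{\sigma'}\simeq X_{\sigma\sigma'}$ are the \emph{canonical} ones and not arbitrary; granting the homotopy-group identifications already established in the text, the remainder is bookkeeping with duality data rather than anything conceptual.
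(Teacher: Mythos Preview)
Your proposal is correct and follows the same approach the paper takes: the paper does not give a separate proof of this lemma but treats it as an immediate consequence of the preceding discussion, namely the identifications $\pi_1(\Bm(S))=Aut(I_S)$, $\pi_2(\Bm(S))=\Oo^\times_{I_S}$, and the observation that the canonical isomorphisms $X_\sigma\otimes_S X_{\sigma'}\simeq X_{\sigma\sigma'}$ split the $2$-group. You have simply fleshed out in detail (the explicit functor $(L,\sigma)\mapsto L\otimes_S X_\sigma$, the tensor computation, and the coherence check) what the paper leaves implicit.
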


\begin{definition}
Given a homomorphism $Q\rightarrow\A$ from a finite group $Q$ to a $2$-group (or a $3$-group) $\A$, call the resulting homomorphism $Q\rightarrow \pi_1(\A)$ its band.  This is done by analogy with gerbes.
\end{definition}

Thus, given a $Q$-extension $R$ of $S$,  the composition $$Q\rightarrow \Bm(S)\rightarrow Aut(I_S)$$ is the band of $R$.  By the discussion above, the  $Q$-extensions of $S$ with the band $\rho: Q\rightarrow Aut(I_S)$ are parameterized by the twisted homomorphisms $$Q\rightarrow T(\Oo^\times_{I_S}),$$ i.e., the second cohomology group $H^2(Q,\Oo^\times_{I_S})$ where the action of $Q$ on $\Oo^\times_{I_S}$ is via its action on $I_S$ via $\rho$.

\begin{lemma}
We have an isomorphism of abelian groups $$H^2(Q,\Oo^\times_{I_S})\simeq H^2(Q\act I_S,\C^\times),$$ where $Q\act I_S$ is the action groupoid of $Q$ on $I_S$.
\end{lemma}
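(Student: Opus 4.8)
The plan is to produce the isomorphism at the level of cochain complexes, which gives it in every degree at once (the case $n=2$ being the one relevant to $Q$-extensions). Recall that $H^\bullet(Q,\Oo^\times_{I_S})$ is computed by the inhomogeneous bar complex $C^n(Q,\Oo^\times_{I_S})=\mathrm{Map}(Q^n,\Oo^\times_{I_S})$, while $H^\bullet(Q\act I_S,\C^\times)$, the cohomology of the action groupoid with \emph{constant} coefficients $\C^\times$, is computed by the cochain complex of its nerve, whose $n$-cochains assign an element of $\C^\times$ to each string of $n$ composable arrows $i_0\xrightarrow{q_1}i_1\to\cdots\xrightarrow{q_n}i_n$ in $Q\act I_S$. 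Since in the action groupoid such a string out of $i_0$ is nothing but a point of $Q^n\times I_S$ (the target of each arrow being determined by its source and the group element), one has a tautological identification
$$C^n(Q\act I_S,\C^\times)=\mathrm{Map}(Q^n\times I_S,\C^\times)=\mathrm{Map}(Q^n,\Oo^\times_{I_S})=C^n(Q,\Oo^\times_{I_S}),$$
where the $Q$-module structure on $\Oo^\times_{I_S}$ is exactly the one coming from the $Q$-action on $I_S$ (the coefficient group $\C^\times$ itself carrying the trivial action).

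The one thing that requires checking is that this identification intertwines the two differentials, i.e.\ that the simplicial face maps of the nerve of $Q\act I_S$ reproduce the terms of the bar differential of $Q$ acting on $\Oo^\times_{I_S}$: the $0$th face, which deletes $i_0$ together with $q_1$, yields the single term in which $Q$ acts on the coefficients; the intermediate faces, which compose adjacent arrows, yield the terms $f(\dots,q_{j}q_{j+1},\dots)$; and the last face yields the term with no action. Matching these amounts to nothing more than fixing compatible conventions (``the arrow labelled $q$ out of $i$ lands at $q\cdot i$'' versus ``at $q^{-1}\cdot i$''), after which the comparison is routine. This is the expected assertion that the cohomology of an action groupoid $Q\act X$ is the $Q$-equivariant cohomology of $X$, i.e.\ the cohomology of the Borel construction $EQ\times_Q I_S$; so the main, and quite mild, obstacle here is purely the bookkeeping of these conventions, not anything conceptual.

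As a sanity check, and an alternative route that avoids the bookkeeping, one can decompose $I_S=\bigsqcup_\alpha O_\alpha$ into $Q$-orbits. Then $\Oo^\times_{I_S}=\prod_\alpha\Oo^\times_{O_\alpha}$ as $Q$-modules, group cohomology carries this product to a product, and writing $O_\alpha\cong Q/K_\alpha$ with $K_\alpha$ the stabilizer of a chosen point gives $\Oo^\times_{O_\alpha}=\mathrm{Coind}_{K_\alpha}^Q\C^\times$, so Shapiro's lemma yields $H^2(Q,\Oo^\times_{O_\alpha})\simeq H^2(K_\alpha,\C^\times)$. On the other side $Q\act I_S=\bigsqcup_\alpha(Q\act O_\alpha)$, and each $Q\act O_\alpha$ is a connected groupoid equivalent to the one-object groupoid on $K_\alpha$, so by invariance of groupoid cohomology under equivalence $H^2(Q\act O_\alpha,\C^\times)\simeq H^2(K_\alpha,\C^\times)$. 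Both sides thus equal $\prod_\alpha H^2(K_\alpha,\C^\times)$, recovering the claimed isomorphism (the cochain-level argument above being preferable in that it makes the isomorphism manifestly canonical).
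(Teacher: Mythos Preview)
Your proof is correct and takes essentially the same approach as the paper: the paper's argument is precisely the observation that $C^n(Q,\Oo^\times_{X})\simeq C^n(Q\act X,\C^\times)$ as complexes, which is your first paragraph spelled out. Your additional Shapiro's lemma / orbit-decomposition argument is a welcome sanity check but is not in the paper's (very terse) proof.
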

\begin{proof}
The cohomology isomorphism is a case of the general observation that for an action of $Q$ on a finite set $X$ we have an isomorphism of complexes $$C^n(Q,\Oo^\times_X)\simeq C^n(Q\act X,\C^\times)$$ given by \begin{center}
\includegraphics[height=.25in]{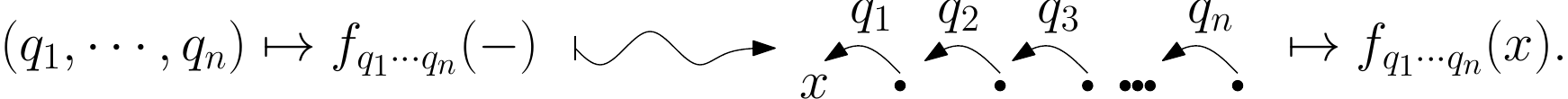}
\end{center}
\end{proof}

Note that $H^2(Q\act I_S,\C^\times)$ parameterizes $S^1$-gerbes on $Q\act I_S$,  and so we have almost proved the following.

\begin{theorem}\label{bijection}
For any action $\rho$ of $Q$ on the set $I_S$, we have a bijection between $Q$-extensions $R$ of $S$ with band $\rho$, and $S^1$-gerbes $\widehat{Q\act I_S}$ on the action groupoid of $\rho$.   Furthermore, as abelian categories  $$R\text{-mod}\simeq\rep'\widehat{Q\act I_S}$$ with the latter denoting $S^1$-twisted representations of the action groupoid of $\rho$.
\end{theorem}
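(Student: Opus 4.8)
The plan is to assemble the bijection directly from the two preceding Lemmas, and to prove the equivalence of categories by decomposing $R$-modules into multiplicity spaces over $I_S$. For the bijection: a $Q$-extension $R$ of $S$ with band $\rho$ is precisely a lift of $\rho\colon Q\to Aut(I_S)$ along $\Bm(S)\to Aut(I_S)$, and, using the splitting $\Bm(S)\simeq T(\Oo^\times_{I_S})\rtimes Aut(I_S)$ of the first Lemma, such a lift is the same datum as a twisted homomorphism $Q\to T(\Oo^\times_{I_S})$; isomorphism classes of these form $H^2(Q,\Oo^\times_{I_S})$ for the $\rho$-twisted action, as already recorded before the theorem. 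Concretely this exhibits $R=\bigoplus_q R_q$ with each $R_q$ isomorphic to $X_{\rho(q)}$ twisted by a necessarily trivializable line bundle $L_q$ on $I_S$, and the multiplication $R_q\p_S R_{q'}\xrightarrow{\sim}R_{qq'}$ recorded, after trivializing the $L_q$, by a $2$-cocycle $c\in Z^2(Q,\Oo^\times_{I_S})$ whose class is independent of the trivializations. The second Lemma identifies $H^2(Q,\Oo^\times_{I_S})$ with $H^2(Q\act I_S,\C^\times)$, which classifies $S^1$-gerbes on the action groupoid; writing $\widehat{Q\act I_S}$ for the gerbe attached to the image of $c$ and composing the two identifications yields the asserted bijection.

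For the equivalence I would send an $R$-module $M$ to its multiplicity data. Restricting to $S=R_1$ gives $M\simeq\bigoplus_{i\in I_S}V_i\p W_i$ with $W_i=\Hom_S(V_i,M)$. Since $V_i^*\p_S V_j\simeq\C$ for $i=j$ and $0$ otherwise, the presentation of $R_q$ above gives $R_q\p_S M\simeq\bigoplus_i V_{\rho(q)(i)}\p L_q(i)\p W_i$, so the structure map $R_q\p_S M\to M$, being left $S$-linear, is the same as a family of linear maps $\phi_q(i)\colon L_q(i)\p W_i\to W_{\rho(q)(i)}$. Associativity of the $R$-action together with the cocycle $c$ then becomes the twisted composition law $\phi_q(\rho(q')(i))\circ\phi_{q'}(i)=c_{q,q'}(i)\,\phi_{qq'}(i)$ (after trivializing the $L_q$), and the unit axiom becomes a normalization; identifying the morphisms of $Q\act I_S$ with the pairs $(q,i)\colon i\to\rho(q)(i)$ and unwinding the definition of an $S^1$-twisted representation, the pair $(\{W_i\},\{\phi_q(i)\})$ is exactly an object of $\rep'\widehat{Q\act I_S}$. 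This assignment is manifestly functorial and has the evident quasi-inverse sending a twisted representation $(\{W_i\},\{\psi_{(q,i)}\})$ to $\bigoplus_i V_i\p W_i$, with $S\simeq\bigoplus_i End(V_i)$ acting in the standard way and $R_q$ through the $\psi_{(q,i)}$ (the twisted composition law being precisely what makes this $R$-action associative). Since a sequence of $R$-modules is exact iff it is exact as a sequence of $\C$-vector spaces iff it is exact in each $W_i$, both functors are additive and exact, so this is an equivalence of abelian categories.

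The step I expect to be the main obstacle is the bookkeeping that matches the multiplication cocycle $c\in Z^2(Q,\Oo^\times_{I_S})$ of $R$ with the $2$-cocycle defining the gerbe on the action groupoid, i.e. the verification that the chain isomorphism $C^\bullet(Q,\Oo^\times_{I_S})\simeq C^\bullet(Q\act I_S,\C^\times)$ of the second Lemma carries the former to the latter. In practice this amounts to keeping straight the left and right $S$-bimodule structures on the graded pieces $R_q$ (equivalently, the direction of the morphisms $(q,i)\colon i\to\rho(q)(i)$ and which slot of $c_{q,q'}$ is ``shifted'' by the action) and to fixing coherent trivializations of the $L_q$ once and for all. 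None of these choices affects the resulting equivalence up to natural isomorphism, but they must be aligned for the twisted composition law above to coincide on the nose with the defining condition of a twisted representation.
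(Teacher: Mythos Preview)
Your proof is correct and, in content, identical to the paper's: the bijection is assembled from the two preceding lemmas exactly as you say, and your multiplicity-space computation for the module equivalence is the paper's argument. The only difference is organizational: the paper factors the equivalence through the intermediate identification $R\text{-mod}\simeq S\text{-mod}^Q$ (an $R$-module being precisely a $Q$-equivariant object for the action $Q\to\At(S\text{-mod})$ that $R$ encodes) and then isolates your computation as a standalone Lemma~\ref{invar}, $\Cc^Q\simeq\rep'\widehat{Q\act I_\Cc}$, valid for any finite $2$-vector space $\Cc$ with $Q$-action; this buys a reusable statement and makes explicit that the choices $V_i\in i$ are exactly what enter the construction of the gerbe, but the underlying mathematics is the same as yours.
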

Note that the equivalence above is subject to the choices of $V_i\in i\in I_S$.

\begin{remark}
Observe that $S^1$-gerbes on the action groupoid of $\rho$ can be explicitly described as follows. Let $i_1,\cdots, i_r$ be orbit representatives, let $Q_j=\text{Stab}_Q(i_j)$, then collections $\{\widehat{Q}_j\}$ of $\C^\times$-central extensions of $Q_j$ correspond to $S^1$-gerbes.
\end{remark}

\begin{proof}
Indeed, it is immediate that $$R\text{-mod}\simeq S\text{-mod}^Q$$ after recalling that the map $Q\rightarrow\At(S\text{-mod})$ is equivalent to the data of the extension $R$.  The rest is a consequence of Lemma \ref{invar}.
\end{proof}

\begin{lemma}\label{invar}
Let $\Cc$ be a finite dimensional $2$-vector space over $\C$, i.e., an abelian category non-canonically equivalent to a finite number of copies of $\vect$.  Assume that a finite group $Q$ acts on $\Cc$, more precisely we are given a map of $2$-groups $Q\rightarrow\At (\Cc)$.  Then $$\Cc^Q\simeq \rep'\widehat{Q\act I_{\Cc}}.$$
\end{lemma}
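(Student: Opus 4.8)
The plan is to mimic, at the level of $\At(\Cc)$, the reduction already carried out for $\Bm(S)=\At(S\text{-mod})$, and then to unwind the definition of the equivariantization $\Cc^Q$. Since $\Cc$ is a finite $2$-vector space, a choice of simple objects $V_i$, $i\in I_\Cc$, is the same as a choice of equivalence $\Cc\simeq\bigoplus_{i\in I_\Cc}\vect$; by the same argument as in the first Lemma of this section these choices split $\At(\Cc)$ as $T(\Oo^\times_{I_\Cc})\rtimes Aut(I_\Cc)$, with canonical sections $X_\sigma$ satisfying $X_\sigma\otimes X_{\sigma'}\simeq X_{\sigma\sigma'}$. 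Composing the given action $Q\to\At(\Cc)$ with the projection to $Aut(I_\Cc)$ yields a homomorphism $\rho\colon Q\to Aut(I_\Cc)$, its band; writing each image equivalence as $L_q\otimes X_{\rho(q)}$ with $L_q$ an (automatically trivializable) line bundle on $I_\Cc$, and trivializing the $L_q$, the associator of the action becomes an $\Oo^\times_{I_\Cc}$-valued $2$-cochain $\alpha$ on $Q$, which coherence of the $2$-group map forces to lie in $Z^2(Q,\Oo^\times_{I_\Cc})$ (with $Q$ acting via $\rho$). By the chain isomorphism Lemma this is exactly the data of an $S^1$-gerbe $\widehat{Q\act I_\Cc}$ on the action groupoid of $\rho$, and this is the gerbe in the statement.

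Next I would describe both categories concretely in these coordinates. An object of $\Cc^Q$ is a pair $(M,\{u_q\}_{q\in Q})$ with $M\in\Cc$ and isomorphisms $u_q\colon q\cdot M\xrightarrow{\sim}M$ satisfying $u_{qq'}=u_q\circ(q\cdot u_{q'})$ up to the associator of the action. Decomposing $M=\bigoplus_{i\in I_\Cc}M_i\otimes V_i$ with $M_i$ finite dimensional, the $j$-th multiplicity space of $q\cdot M=(L_q\otimes X_{\rho(q)})(M)$ is $M_{\rho(q)^{-1}(j)}$ once $L_q$ is trivialized, so the datum $u_q$ is the same as a family of linear isomorphisms $\phi^j_q\colon M_{\rho(q)^{-1}(j)}\to M_j$, i.e., a linear map assigned to each arrow of the groupoid $Q\act I_\Cc$. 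On the other side, an object of $\rep'\widehat{Q\act I_\Cc}$ is, by definition, a finite dimensional vector space at each object of $Q\act I_\Cc$ together with precisely such a family of maps, now required to compose according to the chosen gerbe $2$-cocycle. The assignment $(M,\{u_q\})\mapsto(\{M_i\},\{\phi^j_q\})$ is manifestly functorial, and it is fully faithful: a morphism in $\Cc^Q$ is a morphism $M\to M'$ intertwining the $u_q$, which decomposes componentwise into morphisms $M_i\to M'_i$ intertwining the $\phi^j_q$, that is, a morphism of twisted representations. Running the dictionary backwards from any twisted representation produces a preimage, so the functor is also essentially surjective.

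The one genuinely substantive point — and the step I expect to be the main obstacle — is the verification that the coherence identity $u_{qq'}=u_q\circ(q\cdot u_{q'})$, once rewritten using the trivializations of the $L_q$, the canonical isomorphisms $X_\sigma\otimes X_{\sigma'}\simeq X_{\sigma\sigma'}$, and the interchange $X_\sigma\otimes L\simeq(\sigma\cdot L)\otimes X_\sigma$, becomes precisely the $\alpha$-twisted composition law for the $\phi^j_q$, with $\alpha$ the very cocycle produced in the first paragraph and not merely one cohomologous to it; this requires keeping careful track of the associator of $Q\to\At(\Cc)$ and using the explicit form of the chain isomorphism to transport $\alpha$ to a cocycle on $Q\act I_\Cc$. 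Granting this bookkeeping, the functor above is an equivalence of abelian categories $\Cc^Q\simeq\rep'\widehat{Q\act I_\Cc}$, depending, as in Theorem~\ref{bijection}, on the initial choices of the $V_i$. Specializing $\Cc=S\text{-mod}$ then recovers exactly the equivalence invoked to complete the proof of Theorem~\ref{bijection}.
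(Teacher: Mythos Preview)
Your approach is correct and matches the paper's: decompose $M\in\Cc^Q$ into multiplicity spaces $M_i$ along the chosen simples and transcribe the equivariance isomorphisms as linear maps between these spaces, then reverse the procedure for essential surjectivity.

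The one difference worth flagging is in how the gerbe is presented. You trivialize the $L_q$ to extract a scalar $2$-cocycle $\alpha\in Z^2(Q,\Oo^\times_{I_\Cc})$ and then must verify that the equivariance constraint becomes the $\alpha$-twisted composition law---the bookkeeping you correctly identify as the delicate step. The paper instead defines $\widehat{Q\act I_\Cc}$ by decorating each arrow $(q,i)$ with the actual line $\ell_{q,i}=\Hom_\Cc(c_{q\cdot i},q\cdot c_i)$, so that a twisted representation is by definition a family of isomorphisms $\varphi_{q,i}:\ell_{q,i}\otimes M_i\simeq M_{q\cdot i}$. With the lines left un-trivialized, the identity $\varphi_{q,i}\in\Hom_\Cc(M_i\,q\cdot c_i,M_{q\cdot i}\,c_{q\cdot i})=M_i^*\otimes M_{q\cdot i}\otimes\ell_{q,i}^*$ falls out directly from the decomposition of $\varphi_q$, and the coherence of the $\varphi_q$ with the associator of the $Q$-action \emph{is} the composition law in the gerbe, with no correction terms to chase. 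So the paper's packaging dissolves exactly the obstacle you anticipated; otherwise the arguments coincide.
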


The construction of $\widehat{Q\act I_{\Cc}}$ proceeds as follows.  Let $I_{\Cc}$ denote the set of isomorphism classes of simple objects of $\Cc$, so that we have an action of $Q$ on the set $I_{\Cc}$.  Choose representatives $c_i\in i\in I_{\Cc}$, you may notice that we keep doing this. We define an $S^1$-gerbe on the action groupoid by decorating the arrow from $i$ to $j$ labeled by $q$ with $$\ell_{q,i}:=Hom_{\Cc}(c_j,q\cdot c_i).$$

\begin{proof}
Let us sketch a proof.  If $M\in \Cc^Q$, then $M\in \Cc$ and it comes equipped with suitably compatible isomorphisms $\varphi_q:q\cdot M\simeq M$.  Let $M_i=Hom_{\Cc}(c_i, M)$ so that $M\simeq\bigoplus_i M_i c_i$ and the isomorphisms $\varphi_q$ yield  $\varphi_{q,i}\in Hom_{\Cc}(M_i q\cdot c_i, M_j c_j)=M_i^*\otimes M_j\otimes\ell^*_{q,i}$, i.e., we obtain $\varphi_{q,i}:\ell_{q,i}\otimes M_i\simeq M_j$ and so an element of $\rep'\widehat{Q\act I_{\Cc}}$. For the converse, we reverse the procedure so that again $M\simeq\bigoplus_i M_i c_i$ and $\varphi_q=\bigoplus_i\varphi_{q,i}$.
\end{proof}

\begin{remark}
Using the Lemma \ref{invar}, the category $\Cc^Q$ can be described more compactly, albeit even less canonically.  Namely, let $X$ be the set of orbits of $Q$ acting on $I_{\Cc}$, and let $x$ denote both an element in $X$ and an arbitrary fixed choice of its representative in   $I_{\Cc}$.  Let $$\widehat{Q}_x=\{(q,c)| q\cdot x=x,\,\,c\in\text{Iso}(c_x,q\cdot c_x)\}$$ which is a $\C^\times$-central extension of the stabilizer in $Q$ of $x\in X$.  Then \begin{equation}\label{frules}\Cc^Q\simeq\bigoplus_{x\in X}\rep'(\widehat{Q}_x).\end{equation}  The formula \eqref{frules} was originally shown in \cite{fusionrules}.
\end{remark}

Note that the construction of the $\widehat{Q\act I_{\Cc}}$ above from the action map $Q\rightarrow\At (\Cc)$ provides an explicit version of the cohomologically obtained equivalence between extensions and $S^1$-gerbes.  Simply let $\Cc=S\text{-mod}$.  More generally, with $\Cc$ as above, we have (using the choice of $c_i\in I_{\Cc}$): $$\At(\Cc)\simeq \Pic_{I_{\Cc}}\rtimes Aut(I_\Cc).$$ Thus the homomorphisms $Q\rightarrow\At(\Cc)$ with band $\rho: Q\rightarrow Aut(I_\Cc)$ are in bijection with homomorphisms $Q\act I_{\Cc}\rightarrow \Pic$ where the latter $2$-group is equivalent to $\At(\vect)$.  We can therefore restate Theorem \ref{bijection} as follows:

\begin{prop}
For any action $\rho: Q\rightarrow Aut(I_\Cc)$ there is a bijection between actions of $Q$ on $\Cc$ with band $\rho$ and actions of $Q\act I_{\Cc}$ on $\vect$.  Furthermore, $$\Cc^Q\simeq\vect^{Q\act I_{\Cc}}.$$
\end{prop}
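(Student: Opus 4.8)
The plan is to deduce the Proposition from two ingredients already in place: the decomposition $\At(\Cc)\simeq\Pic_{I_\Cc}\rtimes Aut(I_\Cc)$ displayed above, and Lemma \ref{invar}, which I will apply both to $\Cc$ and to $\vect$. The decomposition holds for the same reason as the first Lemma of this section: once representatives $c_i\in i\in I_\Cc$ are fixed, the role of the splitting bimodules $X_\sigma$ is played by the autoequivalence of $\Cc$ sending each $c_i$ to $c_{\sigma(i)}$, and the canonical identifications between composites of these autoequivalences split the $2$-group. All identifications below depend on the $c_i$, which is why the statement, like Theorem \ref{bijection}, is only canonical up to those choices.

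For the bijection I would argue directly at the level of $2$-group homomorphisms rather than through cohomology. Fixing a band $\rho\colon Q\to Aut(I_\Cc)$, an action of $Q$ on $\Cc$ with band $\rho$ is, via the semidirect decomposition, the same as a twisted homomorphism $Q\to\Pic_{I_\Cc}$ with $Q$ acting through $\rho$; equivalently, as already observed in the text, the same as a homomorphism $Q\act I_\Cc\to\Pic\simeq\At(\vect)$, i.e.\ an action of the groupoid $Q\act I_\Cc$ on $\vect$. Concretely, this correspondence sends an action $Q\to\At(\Cc)$ with band $\rho$ to the homomorphism that decorates an arrow $q\colon i\to j$ of $Q\act I_\Cc$ (so that $j=\rho(q)(i)$) by the line $\Hom_\Cc(c_j,q\cdot c_i)$, with structure isomorphisms coming from the monoidal structure of the action together with the canonical identifications above. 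That this is a bijection (on isomorphism classes) is precisely the statement that a lift of $\rho$ along $\Pic_{I_\Cc}\rtimes Aut(I_\Cc)\to Aut(I_\Cc)$ is the same as such a choice of lines together with the evident cocycle-type compatibility, which is the content of the second Lemma, now read through the splitting rather than as an isomorphism of complexes.

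For the equivalence of categories I would apply Lemma \ref{invar} twice. Applied to $\Cc$ it gives $\Cc^Q\simeq\rep'\widehat{Q\act I_\Cc}$, where the $S^1$-gerbe $\widehat{Q\act I_\Cc}$ decorates $q\colon i\to j$ by $\ell_{q,i}=\Hom_\Cc(c_j,q\cdot c_i)$, the very same line that, by the previous paragraph, the corresponding action of $Q\act I_\Cc$ on $\vect$ attaches to $q\colon i\to j$. Now apply Lemma \ref{invar} to $\vect$ equipped with this action of the groupoid $Q\act I_\Cc$: since $I_\vect$ is a single point, the relevant action groupoid is $Q\act I_\Cc$ itself and the gerbe produced by the Lemma is again $\widehat{Q\act I_\Cc}$. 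Hence $\vect^{Q\act I_\Cc}\simeq\rep'\widehat{Q\act I_\Cc}$, which is the same abelian category as $\Cc^Q$; this also shows that the bijection of the first part is compatible with the equivalence of invariant categories.

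I expect the only real difficulty to be bookkeeping. One must check that the line-bundle decoration coming from the splitting $c_i\mapsto c_{\sigma(i)}$ agrees on the nose with both the decoration in Lemma \ref{invar} and the chain isomorphism ``chainiso.pdf'' used to prove the second Lemma; this is routine as long as the representatives $c_i$ are never re-chosen. Organizing everything around $2$-group homomorphisms, as above, keeps these matchings visible and lets one avoid writing the chain isomorphism down explicitly. Apart from that, the proof is a formal consequence of Theorem \ref{bijection} (the case $\Cc=S\text{-mod}$) and Lemma \ref{invar}, now phrased for an arbitrary finite dimensional $2$-vector space $\Cc$ in place of $S\text{-mod}$.
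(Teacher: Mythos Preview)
Your proposal is correct and follows essentially the same approach as the paper: the semidirect decomposition $\At(\Cc)\simeq\Pic_{I_\Cc}\rtimes Aut(I_\Cc)$ together with the identification of twisted homomorphisms $Q\to\Pic_{I_\Cc}$ with homomorphisms $Q\act I_\Cc\to\Pic\simeq\At(\vect)$ is exactly how the paper obtains the bijection, and the paper treats the equivalence $\Cc^Q\simeq\vect^{Q\act I_\Cc}$ as a direct restatement of Theorem~\ref{bijection} via Lemma~\ref{invar}. Your double application of Lemma~\ref{invar} (once to $\Cc$, once to $\vect$ with the groupoid acting) just spells out what the paper leaves implicit; note only that this second application uses the groupoid version of the lemma, which the paper sanctions earlier with the parenthetical ``it can be a groupoid, nothing would change.''
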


\subsection{Group extensions}\label{grp}
Let $K$ be a finite group, we may consider $S=\C K$ so that $S\text{-mod}=\rep(K)$. Group extensions $G$ of $Q$ by $K$ provide examples of $Q$-extensions of $\C K$ via $G\mapsto \C G$. Namely, we have a morphism of $2$-groups: \begin{align*}B(K)&\rightarrow\Bm(\C K)\\X&\mapsto\C X \end{align*} where $B(K)$ is the $2$-group of $K$-bitorsors. Note that group extensions are the same as maps $Q\rightarrow B(K)$, namely if $G$ is a group extension then $q\mapsto G_q\in B(K)$. However the functor from group extensions to the algebra extensions is not particularly well behaved.  This can be seen cohomologically, i.e., all extensions of $Q$ by $K$ with a fixed band $Q\rightarrow \Ot(K)$ form an $H^2(Q,Z(K))$ torsor, if there are any.  However $Q$-extensions of $\C K$, with the same band now viewed as $Q\rightarrow Aut (I_K)$, are parameterized by $H^2(Q,Z(\C K)^\times)$.  It is true nevertheless that the categories of modules are preserved, i.e., $\rep(G)=\rep(\C G)$.  Following up the functor from group extensions to ring extensions by the equivalence between the latter and $S^1$-gerbes, one obtains the map studied in \cite{gerbes}.  It also explains the failure of that map to be an equivalence in any sense.

\begin{remark}
To illustrate the difference between group extensions and algebra extensions, consider the example of the standard non-trivial extension $$\Z/p\rightarrow \Z/p^2\rightarrow\Z/p.$$  The corresponding algebra extension of $\C\Z/p$ by $\Z/p$ has to be trivial, since the band action of $\Z/p$ on $(\Z/p)^*$ is trivial, and the stabilizers, which are all $\Z/p$, have no non-trivial $\C^\times$-central extensions. Indeed, by the same argument we see that $\Z/m$-extensions of $S$ are parametrized by their bands alone. Indeed, since no subgroup of a cyclic group, itself being cyclic, can have any non-trivial $\C^\times$-central extensions, so the action groupoid $\Z/m\act I_S$ has no non-trivial $S^1$-gerbes on it.
\end{remark}

A twisted version is also considered in \cite{gerbes}, namely, $S=\C_{\phi}K$ where $\phi$ is a $2$-cocycle in the group cohomology of $K$. In this case $S\text{-mod}=\rep^{\phi}(K)$.  Examples of such extensions are provided by group extensions of the form \begin{equation}\label{twist}\widehat{K}\rightarrow\widehat{G}\rightarrow Q\end{equation} where $\widehat{K}$ is a $\C^\times$ central extension of $K$ given by $\phi$ so that $$\C_{\phi}K=\C\times_{\C^\times}\widehat{K}.$$  And $\widehat{G}$ is given by an extension of $\phi$ to all of $G$.  Again the map studied in \cite{gerbes} is obtained via the equivalence between extensions and gerbes.  More precisely, from \eqref{twist}, we obtain a map $Q\rightarrow B'(\widehat{K})$ with $q\mapsto\widehat{G}_q\in B'(\widehat{K})$, where the latter consists of $\widehat{K}$-bitorsors $X$ with $xc=cx$ for $c\in\C^\times\subset\widehat{K}$ and $x\in X$.  Observe that we have \begin{align*}B'(\widehat{K})&\rightarrow\Bm(\C_\phi K)\\X&\mapsto \C\times_{\C^\times} X\end{align*} so that a group extension \eqref{twist} produces an extension of $\C_\phi K$  by $Q$, namely $\C_\phi G$.

As the equivalence preserves categories of modules this yields a different point of view on the Clifford theory that describes representations of a group via some structures on the representation category of its normal subgroup.

\section{The quasi-trivial $Q$-extensions of dual fusion categories}\label{fusioncats}

By examining the case of $S=\C K$ above we are led to the consideration of extensions $G$ of $Q$ by $K$ from the point of view of the resulting action of $Q$ on $\rep(K)$ that also determines the extension.  As before we have that $$\rep(K)^Q\simeq\rep(G).$$  But $\C K$ is not just an algebra, but a Hopf algebra which endows $\rep(K)$ with a tensor structure.  If we were to consider an arbitrary $Q$-extension of $\C K$ then the result has no reason to also be a Hopf algebra just as if the action of $Q$ on $\rep(K)$ does not preserve the tensor then $\rep(K)^Q$ need not have one.

Thus a definition of a $Q$-extension of a Hopf algebra $H$  (semi-simple and finite dimensional) suggests itself.  Namely, it is the data of $$Q\rightarrow\Att(\rep(H)).$$  Note that the pair $(\rep(H),\vect)$ consisting of a fusion category and a fiber functor completely determines $H$ as a Hopf algebra.  Thus, since $\rep(H)^Q$ is now both a fusion category and has a fiber functor, it determines a Hopf algebra $T$ which could be called a $Q$-extension of $H$ and we would have $$\rep(T)=\rep(H)^Q$$ by definition, as fusion categories with fiber functors. The category $\rep(T)$ could be understood, as above, in terms of twisted representations of a certain $S^1$-gerbe on $Q\act I_H$.  However one should point  out that this perspective is poorly suited to seeing the tensor structure on the representations, though \cite{fusionrules} deals exactly with this problem.

There are certain issues with this definition if one wants particular things to be true.  The lesser problem is that for $H=\C K$, the $2$-group $\Att(\rep(K))$ is strictly larger, in general, than $B(K)$ the $2$-group of $K$-bitorsors.  Since it is the homomorphisms from $Q$ into the latter that yield group extensions of $Q$ by $K$, we would have to live with ``quantum" extensions as well.  This problem (and some may not consider it as such) can easily be fixed.  Denote by $\Att(\rep(H),\vect)$ the full subcategory of objects that preserve the canonical fiber functor.  Then since $$\Att(\rep(K),\vect)\simeq B(K)$$ we recover, in the group case, the usual notion of extension.  Observe that this would work for weak Hopf algebras as well, namely given a homomorphism $Q\rightarrow \Att(\rep(H), R\text{-mod})$, where $R$ is the base of the weak Hopf algebra $H$,  we can define the extension weak Hopf algebra $T$ as obtained \cite{modulecategories, eno1} from the data $(\rep(H)^Q, R\text{-mod}, R)$.

The bigger problem is that this definition is, in either of its forms, not self-dual ``relative to $Q$".  More precisely, the $Q$-extensions of $H$ are not in bijection with the $Q$-extensions of the dual Hopf algebra $H^*$, using either definition.  For example, examining the second version, we see that for $H=\C K$, we have $\Att(\rep(K),\vect)\simeq B(K)$ whereas $$\Att(\rep(\Oo_K),\vect)\simeq K^*[1]\rtimes \text{Aut}(K).$$  Note that $K^*[1]$ stands for the group of characters of $K$ shifted up by a degree, i.e., it is the $2$-group of $K^*$-torsors.

More concretely, if we have an extension $$A\rightarrow G\rightarrow Q$$ with $A$ an abelian group (to stay inside groups for duality), there is no dual of $G$ relative to $Q$ in groups, namely there is no natural $$A^*\rightarrow ``G^{*Q}"\rightarrow Q.$$  On the other hand, if we consider this problem in the setting of fusion categories then a solution presents itself.  Namely, the corresponding to $G$ object is the fusion category $\vect_G$ of $G$-graded vector spaces with convolution.  It has a module category $\vect_Q$ and the dual with respect to this module, see \cite{modulecategories}, is $S_A(G)$ the category of $A$-biequivariant sheaves on $G$ which is equivalent (see Section \ref{stuff1}) to $\vect^\omega_{A^*\rtimes Q}$ where $\omega\in H^3(A^*\rtimes Q, \C^\times)$ encodes the non-triviality of the original extension $G$. More succinctly, \begin{equation}\label{dual}\vect_G^{*\vect_Q}\simeq\vect^\omega_{A^*\rtimes Q}.\end{equation}Thus a non-trivial extension with a trivial cocycle is dual to a trivial extension with a non-trivial cocycle.

More generally, for a group extension $G$ of $Q$ by a possibly non-abelian $K$, we have that  $\vect_G$ is dual (via $\vect_Q$) to $S_K(G)\simeq \rep(K)\rtimes Q$.  Thus we are looking for a definition of extension that unifies the two examples: $\vect_G$ and $\rep(K)\rtimes Q$.

The key to the answer is contained in \cite{eno2}, where a notion of a $Q$-extension of a fusion category $\Cc$ is studied as a higher version of the map $Q\rightarrow \Bm (S)$ used in the previous section.  More precisely, for a fusion category $\Cc$ they consider the $3$-group $\Bmm(\Cc)$ of invertible $\Cc$-bimodule categories.  Thus an extension is a homomorphism of $3$-groups $Q\rightarrow \Bmm(\Cc)$.  It can also be defined as $\Dd=\bigoplus_{q\in Q}\Dd_q$ with $\Dd_1=\Cc$ and suitable additional requirements.  This notion of extension dualizes very well, i.e., if $\Mm$ is a $\Cc$-module category then \begin{equation}\label{brpic}\Bmm(\Cc)\simeq\Bmm(\Cc^{*\Mm})\end{equation} via the map $\Nn\mapsto \Mm^{op}\boxtimes_{\Cc}\Nn\boxtimes_{\Cc}\Mm$.  Equivalently, if $\Dd$ is a $Q$-extension of $\Cc$, then $\Dd\boxtimes_{\Cc}\Mm$ is a $\Dd$-module category and $\Dd^{*\Dd\boxtimes_{\Cc}\Mm}$ is the dual $Q$-extension of $\Cc^{*\Mm}$.  Compare this with \cite{weaklygroup}.

Unfortunately, $\Bmm(\Cc)$ is quite unwieldy (for our purposes) and $\Ott(\Cc)$ of \cite{eno2} suites us much better.  It is roughly $[\Cc\rightarrow \Att(\Cc)]$ in the same way that $B(K)$ is $[K\rightarrow \text{Aut}(K)]$.  More concretely, it is the subgroup of $\Bmm(\Cc)$ that consists of quasi-trivial invertible $\Cc$-bimodules, namely those that are equivalent to $\Cc$ itself as left $\Cc$-modules.  One checks that they are all of the form $\Cc^\sigma$ where the right action of $\Cc$ on itself has been modified by a $\sigma\in\text{Aut}^\otimes(\Cc)$, while the left action remained unchanged.  Thus there is a homomorphism $$\Att(\Cc)\rightarrow\Ott(\Cc)$$ that allows us to modify the previous attempt at a definition of a $Q$-extension of $\Cc$ (viewed as the category of representations of a Hopf algebra) from $Q\rightarrow \Att(\Cc)$ to $Q\rightarrow \Ott(\Cc)$.

We note that $\Ott(\Cc)$ is not preserved by the duality above. Indeed let $\omega\in Z^2(K,\C^\times)$ be a $2$-cocycle, then it gives a tensor autoequivalence of $\ok$, yet the dual of $\ok\,^\omega$ with respect to the canonical fiber functor is $\rep^\omega(K)\ncong \rep(K)$ as  left $\rep(K)$-modules.

Fortunately, the fix is one we have seen before, namely we consider $\Ott(\Cc,\Mm)$ instead.  This means that we only consider $\Nn\in\Ott(\Cc)$ with $\Nn\boxtimes_{\Cc}\Mm$ equivalent (via an unspecified equivalence) to the module $\Mm$.  To see that this is indeed a fix, observe that the duality sends $\Nn$ to $\Mm^{op}\boxtimes_{\Cc}\Nn\boxtimes_{\Cc}\Mm$ so that  $\Cc^{*\Mm}\simeq\Mm^{op}\boxtimes_{\Cc}\Mm$ is equivalent to the dual of $\Nn$, as a left module category, if and only if $\Nn\boxtimes_{\Cc}\Mm\simeq\Mm$ as left module categories. Furthermore, $\Ott(\Cc,\Mm)$  consists of all objects of $\Ott(\Cc)$ that dualize to objects of $\Ott(\Cc^{*\Mm})$.  Note that $$\Att(\Cc,\Mm)\rightarrow \Ott(\Cc,\Mm).$$

\begin{remark}\label{band}
To give an idea of the difference between $\Bmm(\Cc)$, $\Ott(\Cc)$ and $\Ott(\Cc,\Mm)$, let us consider the case of $\Cc=\vect_A$ for an abelian group $A$ with its canonical module $\vect$.  Then by \cite{eno2}, we have $\text{BrPic}(\vect_A)\simeq \Oo(A\oplus A^*,q)$, i.e., it is the orthogonal group of automorphisms of $A\oplus A^*$ with its natural quadratic form $q(a,\chi)=\chi(a)$.  On the other hand, $\text{Out}(\vect_A)$ is the subgroup of block lower triangular matrices in the orthogonal group: \begin{align*}H^2(A)\rtimes Aut(A)&\rightarrow\Oo(A\oplus A^*,q)\\(B,\phi)&\mapsto\begin{bmatrix}\phi&0\\\phi^{-1*}B& \phi^{-1*}\end{bmatrix}\end{align*} where $B$ is the skew-symmetric bicharacter indexing a cohomology class.  Lastly, $\text{Out}(\vect_A,\vect)\simeq Aut(A)$ consists of block diagonal matrices.

\end{remark}

Let us summarize the above in the following Proposition.

\begin{prop}\label{brpic}
Let $\Cc$ be a fusion category with $\Mm$ a module category, then ``conjugation" by $\Mm$ establishes a pair of equivalences of $3$-groups: $$\Bmm(\Cc)\simeq\Bmm(\Cc^{*\Mm})$$ and more importantly $$\Ott(\Cc,\Mm)\simeq\Ott(\Cc^{*\Mm},\Mm).$$
\end{prop}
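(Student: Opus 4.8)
The plan is to deduce both equivalences from a single Morita‑invariance statement. Write $F(\Nn):=\Mm^{op}\boxtimes_{\Cc}\Nn\boxtimes_{\Cc}\Mm$ and $G(\Dd):=\Mm\boxtimes_{\Cc^{*\Mm}}\Dd\boxtimes_{\Cc^{*\Mm}}\Mm^{op}$, where $\Mm$ is regarded as the invertible $(\Cc,\Cc^{*\Mm})$‑bimodule category implicit in the definition of $\Cc^{*\Mm}$ and $\Mm^{op}$ is its inverse $(\Cc^{*\Mm},\Cc)$‑bimodule. The only facts about $\Mm$ that are needed are the standard bimodule equivalences $\Mm^{op}\boxtimes_{\Cc}\Mm\simeq\Cc^{*\Mm}$ and $\Mm\boxtimes_{\Cc^{*\Mm}}\Mm^{op}\simeq\Cc$. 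From the first one gets $F(\Cc)\simeq\Cc^{*\Mm}$; from the second one gets $F(\Nn\boxtimes_{\Cc}\Nn')\simeq F(\Nn)\boxtimes_{\Cc^{*\Mm}}F(\Nn')$ (insert $\Mm\boxtimes_{\Cc^{*\Mm}}\Mm^{op}\simeq\Cc$ between the two copies of $F$) as well as $G\circ F\simeq\mathrm{id}$ and $F\circ G\simeq\mathrm{id}$. That $F$ and $G$ are genuine homomorphisms of $3$‑groups, coherence data included, is exactly the Morita invariance worked out in \cite{eno2}. This already yields $\Bmm(\Cc)\simeq\Bmm(\Cc^{*\Mm})$, and in particular $F$ and $G$ preserve invertibility.

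For the sharper equivalence I would first record a cancellation principle: since $\Mm\boxtimes_{\Cc^{*\Mm}}\bigl(\Mm^{op}\boxtimes_{\Cc}X\bigr)\simeq X$ naturally for every left $\Cc$‑module category $X$, the operation $X\mapsto\Mm^{op}\boxtimes_{\Cc}X$ detects equivalence of left modules, i.e. $X\simeq X'$ as left $\Cc$‑modules iff $\Mm^{op}\boxtimes_{\Cc}X\simeq\Mm^{op}\boxtimes_{\Cc}X'$ as left $\Cc^{*\Mm}$‑modules, and symmetrically for $Y\mapsto\Mm\boxtimes_{\Cc^{*\Mm}}Y$. Combining this with $\Mm^{op}\boxtimes_{\Cc}\Mm\simeq\Cc^{*\Mm}$ gives the key claim: for $\Nn\in\Ott(\Cc)$, the bimodule $F(\Nn)$ is quasi‑trivial over $\Cc^{*\Mm}$, i.e. lies in $\Ott(\Cc^{*\Mm})$, if and only if $\Nn\in\Ott(\Cc,\Mm)$. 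Indeed $F(\Nn)\simeq\Cc^{*\Mm}\simeq\Mm^{op}\boxtimes_{\Cc}\Mm$ as left $\Cc^{*\Mm}$‑modules is, by the cancellation principle, equivalent to $\Nn\boxtimes_{\Cc}\Mm\simeq\Mm$ as left $\Cc$‑modules, which is the defining condition for membership in $\Ott(\Cc,\Mm)$; this is precisely the computation sketched in the paragraphs preceding the Proposition, now read as a biconditional. Since $\Mm$ exhibits $\Cc$ and $\Cc^{*\Mm}$ as mutually dual, $(\Cc^{*\Mm})^{*\Mm}\simeq\Cc$, running the identical argument with $\Cc$ and $\Cc^{*\Mm}$ interchanged and $\Mm$ replaced by $\Mm^{op}$ gives the symmetric claim: for $\Dd\in\Ott(\Cc^{*\Mm})$, the bimodule $G(\Dd)$ lies in $\Ott(\Cc)$ if and only if $\Dd\in\Ott(\Cc^{*\Mm},\Mm)$.

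Assembling: if $\Nn\in\Ott(\Cc,\Mm)$ then $F(\Nn)\in\Ott(\Cc^{*\Mm})$ by the key claim, while $G(F(\Nn))\simeq\Nn\in\Ott(\Cc)$, so the symmetric claim applied to $\Dd=F(\Nn)$ places $F(\Nn)$ in $\Ott(\Cc^{*\Mm},\Mm)$; conversely if $\Dd\in\Ott(\Cc^{*\Mm},\Mm)$ then $G(\Dd)\in\Ott(\Cc)$ and $F(G(\Dd))\simeq\Dd\in\Ott(\Cc^{*\Mm})$, so the key claim places $G(\Dd)$ in $\Ott(\Cc,\Mm)$. Hence $F$ and $G$ restrict to maps $\Ott(\Cc,\Mm)\rightleftarrows\Ott(\Cc^{*\Mm},\Mm)$ which are still mutually inverse (since $F\circ G\simeq\mathrm{id}$, $G\circ F\simeq\mathrm{id}$) and still homomorphisms of $3$‑groups, being restrictions of such; note that $\Ott(\Cc,\Mm)$ and $\Ott(\Cc^{*\Mm},\Mm)$ are indeed sub‑$3$‑groups, closure under $\boxtimes$ following from associativity of the relative tensor product. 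This is the asserted equivalence.

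The part I expect to cost the most effort is bookkeeping rather than ideas: one must keep scrupulous track of which side each of $\Cc$, $\Cc^{*\Mm}$, $\Nn$, $\Mm$, $\Mm^{op}$ acts on, because ``quasi‑trivial'' constrains only the \emph{left} module structure, and one must make the symmetry $(\Cc^{*\Mm})^{*\Mm}\simeq\Cc$ precise enough that $G$ is literally identified with ``conjugation by $\Mm$ seen from the $\Cc^{*\Mm}$ side'', so that the key claim and its symmetric version become two instances of one lemma rather than two separate computations. The only genuinely non‑formal ingredient — that conjugation by an invertible bimodule category is a homomorphism of $3$‑groups, coherences and all — is imported from \cite{eno2}.
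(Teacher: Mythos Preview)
Your proposal is correct and follows exactly the paper's approach: the paper offers no separate proof but merely says ``Let us summarize the above in the following Proposition,'' meaning the argument \emph{is} the preceding discussion---conjugation $F(\Nn)=\Mm^{op}\boxtimes_{\Cc}\Nn\boxtimes_{\Cc}\Mm$ gives the $\Bmm$ equivalence by Morita invariance (imported from \cite{eno2}), and the observation that $F(\Nn)\simeq\Cc^{*\Mm}$ as a left module iff $\Nn\boxtimes_{\Cc}\Mm\simeq\Mm$ cuts out $\Ott(\Cc,\Mm)$. You have simply fleshed this sketch into a careful biconditional with the cancellation principle and the symmetric pass, which is more than the paper itself writes down.
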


Thus $\Ott(\Cc,\Mm)$ fixes the dualizing problem that $\Att(\Cc,\Mm)$ had, and so prompts:

\begin{definition}
Let $Q$ be a finite group and $\Cc$ a fusion category with a module category $\Mm$. Then an extension of $(\Cc,\Mm)$ by $Q$ is a graded fusion category $$\Dd=\bigoplus_q\Dd_q$$ with $\Dd_1\simeq\Cc$ \emph{and} given by a homomorphism $$Q\rightarrow \Ott(\Cc,\Mm).$$
\end{definition}

Note that $\Dd$ comes with a module category $\Nn:=\Dd\boxtimes_{\Cc}\Mm$.  Sometimes we refer to the pair $(\Dd,\Nn)$ as a $Q$-extension of $(\Cc,\Mm)$.

\begin{cor}
The map $$(\Dd,\Nn)\mapsto(\Dd^{*\Nn},\Nn)$$ establishes a bijection between $Q$-extensions of $(\Cc,\Mm)$ and $Q$-extensions of $(\Cc^{*\Mm},\Mm)$.
\end{cor}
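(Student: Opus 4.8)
The plan is to reduce the statement to Proposition \ref{brpic}. By definition a $Q$-extension of $(\Cc,\Mm)$ is a homomorphism of $3$-groups $\rho\colon Q\rightarrow\Ott(\Cc,\Mm)$, and the associated graded fusion category is reconstructed as $\Dd=\bigoplus_{q\in Q}\Dd_q$ with $\Dd_q$ the quasi-trivial invertible $\Cc$-bimodule $\rho(q)$; conversely the grading together with quasi-triviality of each $\Dd_q$ and the module constraint $\Dd_q\boxtimes_{\Cc}\Mm\simeq\Mm$ recover such a $\rho$, and two extensions are identified exactly when the corresponding homomorphisms are equivalent (homotopic) as maps of $3$-groups. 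So it suffices to show that post-composition with the conjugation equivalence
$$\Phi\colon\Ott(\Cc,\Mm)\xrightarrow{\ \sim\ }\Ott(\Cc^{*\Mm},\Mm),\qquad \Nn\mapsto\Mm^{op}\boxtimes_{\Cc}\Nn\boxtimes_{\Cc}\Mm,$$
induces a bijection on equivalence classes of homomorphisms out of $Q$, and then to identify the graded category attached to $\Phi\circ\rho$ with $\Dd^{*\Nn}$.

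The first point is formal. Proposition \ref{brpic} gives that $\Phi$ is an equivalence of $3$-groups; its weak inverse $\Psi$ is ``conjugation by $\Mm$'' in the other direction, which makes sense because $\Mm$ is canonically a $\Cc^{*\Mm}$-module and $(\Cc^{*\Mm})^{*\Mm}\simeq\Cc$. Hence post-composition with $\Phi$ and with $\Psi$ are mutually inverse up to natural homotopy, so they induce mutually inverse bijections on sets of equivalence classes of homomorphisms $Q\rightarrow(-)$. The only genuinely non-formal ingredient here is that the equivalence restricts correctly: one first runs everything at the level of $\Bmm(\Cc)\simeq\Bmm(\Cc^{*\Mm})$, and then uses that $\Ott(\Cc,\Mm)$ is, by its very construction, exactly the sub-$3$-group of $\Bmm(\Cc)$ whose objects dualize into $\Ott(\Cc^{*\Mm},\Mm)$ — so $\Phi$ carries $\Ott(\Cc,\Mm)$ into $\Ott(\Cc^{*\Mm},\Mm)$ and is still an equivalence there. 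This uses the definition of $\Ott(\Cc,\Mm)$ rather than $\Ott(\Cc)$ in an essential way.

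It remains to identify $\Phi\circ\rho$ with $(\Dd^{*\Nn},\Nn)$ for $\Nn=\Dd\boxtimes_{\Cc}\Mm$. Since $\Phi$ acts component-wise on the grading, the $q$-graded piece of the new extension is $\Mm^{op}\boxtimes_{\Cc}\Dd_q\boxtimes_{\Cc}\Mm$; summing over $q$ gives
$$\bigoplus_{q\in Q}\Mm^{op}\boxtimes_{\Cc}\Dd_q\boxtimes_{\Cc}\Mm\;\simeq\;\Mm^{op}\boxtimes_{\Cc}\Dd\boxtimes_{\Cc}\Mm\;\simeq\;\Nn^{op}\boxtimes_{\Dd}\Nn\;\simeq\;\Dd^{*\Nn},$$
where the middle identification uses $\Nn=\Dd\boxtimes_{\Cc}\Mm$, $\Nn^{op}\simeq\Mm^{op}\boxtimes_{\Cc}\Dd$ and $\Dd\boxtimes_{\Dd}\Dd\simeq\Dd$, and the last is the standard description $\Cc^{*\Mm}\simeq\Mm^{op}\boxtimes_{\Cc}\Mm$ applied to $(\Dd,\Nn)$ in place of $(\Cc,\Mm)$. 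The degree-$1$ piece recovers $(\Dd^{*\Nn})_1\simeq\Mm^{op}\boxtimes_{\Cc}\Cc\boxtimes_{\Cc}\Mm\simeq\Cc^{*\Mm}$, as required for an extension of $(\Cc^{*\Mm},\Mm)$, and the module attached to this extension is $\Nn$ with its canonical $\Dd^{*\Nn}$-module structure coming from module-category duality — the same $\Nn$ on both sides. Finally, applying the construction a second time, with $\Nn$ as the bridging module, yields $(\Dd^{*\Nn})^{*\Nn}\simeq\Dd$, so the map squares to the identity and is in particular a bijection.

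The step I expect to be the main obstacle is not any single computation but the coherence bookkeeping in the first two paragraphs: being precise about the associativity/compatibility data carried by a homomorphism of $3$-groups and by the equivalence $\Phi$, so that one really gets a well-defined bijection on equivalence classes, and being honest about what ``the same module $\Nn$ over both $\Dd$ and $\Dd^{*\Nn}$'' means. Once Proposition \ref{brpic} and the dictionary between extensions of $(\Cc,\Mm)$ and homomorphisms $Q\rightarrow\Ott(\Cc,\Mm)$ are granted, the corollary is essentially a restatement.
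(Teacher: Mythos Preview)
Your proposal is correct and follows exactly the route the paper intends: the Corollary is stated without proof precisely because it is meant to be read as an immediate consequence of Proposition~\ref{brpic} together with the already-noted identification $\Dd^{*\Nn}\simeq\Mm^{op}\boxtimes_{\Cc}\Dd\boxtimes_{\Cc}\Mm$ (the sentence ``Equivalently, if $\Dd$ is a $Q$-extension of $\Cc$, then \ldots $\Dd^{*\Dd\boxtimes_{\Cc}\Mm}$ is the dual $Q$-extension of $\Cc^{*\Mm}$''). You have simply made explicit the two steps the paper leaves implicit---post-composing with the conjugation equivalence $\Phi$ and then checking that the graded pieces reassemble into $\Dd^{*\Nn}$---and your caveat about coherence bookkeeping is fair but does not indicate a gap.
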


Let us examine the case of weak Hopf algebras more carefully (as these are almost the same as pairs $(\Cc,\Mm)$). Let $H$ be a weak Hopf algebra with base $R$, then let $(\Cc,\Mm)=(\rep(H), R\text{-mod})$.  Applying the above machinery to this case we obtain, for a given $Q\rightarrow\Ott(\Cc,\Mm)$, a fusion category and module pair $(\Dd,\Nn)$.  The problem is that we can't canonically extract a weak Hopf algebra from it, as it requires a choice of a suitable element in $\Nn$ \cite{modulecategories, eno1}, and all we have is $R\in R\text{-mod}$.  While it is true that $R\text{-mod}$ is a full subcategory of $\Nn=\Dd\boxtimes_{\rep(H)}R\text{-mod}$, the choice of $R$ is no longer suitable.  The situation is no better for a Hopf algebra.  We are forced to compromise.

\begin{definition}
A $Q$-extension of a weak Hopf algebra $H$ is the data of a homomorphism $$Q\rightarrow \Ott(\rep(H), R\text{-mod}).$$
\end{definition}

Then we get:

\begin{cor}
Let $H$ be a semi-simple, finite dimensional weak Hopf algebra over $\C$.  If $Q$ is a finite group, then $Q$-extensions of $H$ are in bijection with $Q$-extensions of $H^*$.
\end{cor}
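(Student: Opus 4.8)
The plan is to deduce this from the Corollary above stating the bijection between $Q$-extensions of a pair $(\Cc,\Mm)$ and of $(\Cc^{*\Mm},\Mm)$, which itself rests on Proposition \ref{brpic}. Set $(\Cc,\Mm)=(\rep(H),R\text{-mod})$, so that, by the preceding definition, the $Q$-extensions of $H$ are precisely the homomorphisms $Q\to\Ott(\Cc,\Mm)$, i.e. exactly the $Q$-extensions of the pair $(\Cc,\Mm)$.

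First I would invoke the reconstruction dictionary for weak Hopf algebras \cite{modulecategories, eno1}: the pair $(\rep(H),R\text{-mod})$ determines $H$, and the categorical duality exchanges it with the dual weak Hopf algebra, $$\Cc^{*\Mm}=\rep(H)^{*R\text{-mod}}\simeq\rep(H^*),$$ this equivalence of fusion categories carrying the canonical module category $\Mm=R\text{-mod}$ to the canonical module category of $\rep(H^*)$ over its base. The two base algebras are separable $\C$-algebras with the same number of blocks, namely the number of simple objects of the module category, so $R\text{-mod}$ literally names the same abelian category on both sides; what matters is the matching of the module structures. Consequently the $Q$-extensions of the pair $(\Cc^{*\Mm},\Mm)$ are precisely the $Q$-extensions of $H^*$.

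The remaining step is formal. By Proposition \ref{brpic}, ``conjugation by $\Mm$'', $\Nn\mapsto\Mm^{op}\boxtimes_\Cc\Nn\boxtimes_\Cc\Mm$, is an equivalence of $3$-groups $\Ott(\Cc,\Mm)\simeq\Ott(\Cc^{*\Mm},\Mm)$, and an equivalence of $3$-groups induces a bijection on (equivalence classes of) homomorphisms out of a fixed finite group $Q$. Hence homomorphisms $Q\to\Ott(\Cc,\Mm)$ biject with homomorphisms $Q\to\Ott(\Cc^{*\Mm},\Mm)$; combined with the identifications of the previous paragraph this is the asserted bijection between $Q$-extensions of $H$ and of $H^*$. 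Unwinding, it sends the extension $\Dd=\bigoplus_q\Dd_q$ with its module category $\Nn=\Dd\boxtimes_\Cc\Mm$ to $\Dd^{*\Nn}$ with the same $\Nn$, exactly as in the Corollary above; since $(H^*)^*=H$, the two directions are mutually inverse.

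The genuinely substantive input — and the point I expect to have to be most careful about — is the compatibility of the categorical duality $\Cc\rightsquigarrow\Cc^{*\Mm}$ with Hopf-algebraic duality $H\rightsquigarrow H^*$, together with the matching of base module categories; everything else is a direct consequence of Proposition \ref{brpic} and the preceding Corollary. It is precisely this compatibility that forces the definition of a $Q$-extension of $H$ to be taken as the homomorphism datum $Q\to\Ott(\rep(H),R\text{-mod})$ rather than the reconstructed weak Hopf algebra: as noted before the statement, the reconstructed object depends on a choice of a suitable object of $\Nn$ that is no longer available, so it is only at the level of the homomorphism data that the bijection is clean.
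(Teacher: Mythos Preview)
Your argument is correct and matches the paper's implicit reasoning: the Corollary is stated with no explicit proof beyond ``Then we get:'', and what you have written is precisely the unpacking of that step via the Definition of a $Q$-extension of $H$ as a homomorphism $Q\to\Ott(\rep(H),R\text{-mod})$, the identification $\rep(H)^{*R\text{-mod}}\simeq\rep(H^*)$, and Proposition \ref{brpic}. Your explicit flagging of the one substantive point (compatibility of categorical duality with weak Hopf duality at the level of module categories) is appropriate and is exactly the content the paper leaves implicit.
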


Note that if we do choose a suitable element in $\Nn$, then the two weak Hopf algebras that arise from $(\Dd,\Nn)$ and $(\Dd^{*\Nn},\Nn)$ are dual to each other.  However, even this non-canonical construction of an ``extension" weak Hopf algebra is unlike our previous attempt.  This can be seen from the two approaches to $G$, an extension of $Q$ by $K$.  The original method, that uses $Q\rightarrow \Att(\rep(\C K),\vect)$, produces the pair $(\rep(G),\vect)$ from which we obtain the Hopf algebra $\C G$.  Our new method produces the pair $(\rep(K)\rtimes Q, \vect_Q)$ which does have a natural choice of $\Oo_Q\in \vect_Q$, but the resulting weak Hopf algebra is nothing like $\C G$.  However, it is true that $\rep(G)$ and $\rep(K)\rtimes Q$ are dual fusion categories with respect to $\rep(K)$. (This is a special case of the duality between $\Cc\rtimes Q$ and $\Cc^Q$ via $\Cc$, see \cite{ni}.) It is thus natural to live with the fact that a $Q$-extension of a weak Hopf algebra is not a weak Hopf algebra but a pair $(\Dd,\Nn)$ with $\Dd_1=\rep(H)$ and $\Nn=\Dd\boxtimes_{\rep(H)}R\text{-mod}$.

To summarize, if one uses the $Q\rightarrow \Att(\Cc,\Mm)$ notion of extension then the extension itself should be $\Cc^Q$, whereas composing the above with $\Att(\Cc,\Mm)\rightarrow \Ott(\Cc,\Mm)$ and so using the last definition one has $\Cc\rtimes Q$ as the extension.  The former is good for giving us precisely the extensions we want, but the latter has very good duality properties.

\begin{remark}
One may try to play the game of Section \ref{algebras} in this setting. Given an extension of a fusion category $\Cc$ by a group $Q$ in the sense of \cite{eno2}, i.e., $Q\rightarrow\Bmm(\Cc)$ we have an action of $Q$ on $\Cc\text{-Mod}$ the $2$-category of module categories over $\Cc$, then one can write $$\Dd\text{-Mod}\simeq\Cc\text{-Mod}\,^Q.$$ This is similar to the point of view taken in \cite{clifford}.
\end{remark}

\begin{remark}
Observation \eqref{brpic} (or \cite{weaklygroup}) points out that $Q$-extensions of $\Cc$ in the sense of \cite{eno2}, when considered together, only depend on the Morita equivalence class of $\Cc$.  Thus they sit on the other extreme of the spectrum of notions of extensions, with maps to $\Att(\Cc)$ (or $\Ott(\Cc)$) having no duality invariance and maps to $\Bmm(\Cc)$ being invariant with respect to all dualities.  With the notion we suggest being in the middle, as possessing an invariance with respect to a chosen fixed duality.
\end{remark}

\begin{remark}
Since the Morita equivalence class of $\Cc$ is determined by its center $\Zz(\Cc)$, considered as braided fusion category \cite{weaklygroup}, it should be possible to restate the classification of $Q$-extensions of $\Cc$ in terms of suitable extensions of its center. This would replace Morita equivalences by actual braided equivalences.  More precisely, in \cite{eno2} it is shown that maps from $Q$ to $\Pc(\Bb)$ classify $Q$-braided extensions  of a braided fusion category $\Bb$. They are called braided $Q$-crossed categories with a faithful $Q$-grading with the trivial component $\Bb$ in \cite{eno2}. It is likely that there is an equivalence between $\Bmm(\Cc)$ and $\Pc(\Zz(\Cc))$ given by $\Mm\mapsto \Zz_{\Cc}(\Mm)=Fun_{\Cc\boxtimes\Cc^{rev}}(\Cc,\Mm)$, thus a correspondence between $Q$-extensions of $\Cc$ and $Q$-braided extensions of $\Zz(\Cc)$.  Under the bijection $\Dd$ corresponds to $\Zz_{\Cc}(\Dd)$ and $\Zz(\Dd)\simeq\Zz_{\Cc}(\Dd)^Q$, see \cite{centers}.  We mention this because (as shown in \cite{eno2}) $\underline{\text{Pic}}(\Zz(\Cc))\simeq\underline{\text{Aut}}^{Br}(\Zz(\Cc))$, so that $Q$-extensions of $\Cc$ are ``almost"\footnote{To be precise, given an $\omega\in H^3(Q)$ and a $Q$-extension $\Dd$ of $\Cc$, we can twist the associator of $\Dd$ by $\omega$ to form $\Dd^\omega$.  The collection of $\Dd^\omega$'s is exactly the $H^3(Q)$-torsor sitting over the morphism $Q\rightarrow \underline{\text{Aut}}^{Br}(\Zz(\Cc))$ that one obtains from $\Dd$.  Note that $\Zz(\Cc)\rtimes Q$ and $\Zz_{\Cc}(\Dd)$ need have little in common even after discounting the $\omega$-freedom.} maps from $Q$ to autoequivalences (monoidal and braided) of ``something".

\end{remark}

\subsection{The case of $\Cc=\ok$}
In the previous section we have defined a notion of a $Q$-extension of a pair $(\Cc,\Mm)$.  The motivation was to have a bijection between $Q$-extensions of a Hopf algebra $H$ and its dual Hopf algebra $H^*$.  This was achieved, but the extensions themselves turned out not to be Hopf algebras, nor even (canonically) weak Hopf algebras.  The purpose of this section is to demonstrate that despite this, at least in the case of $H=\Oo_K$ (and so $H^*=\C K$) it is a useful notion as the extensions it defines are very close to what one would want.

So let us study the bijection of $Q$-extensions for the case of $(\Cc=\ok,\vect)$ and $(\Cc^*=\rep(K),\vect)$.  In the Appendix of \cite{eno2} it is shown, among other things, that quasi-trivial $Q$ extensions of $\ok$, i.e., morphisms $Q\rightarrow \Ott(\ok)$ are given by the data of a group extension $G$ of $Q$ by $K$ plus a $3$-cocycle $\omega\in Z^3(G,\C^\times)$ that restricts to the trivial one on $K$.  More precisely, we can reinterpret the above as follows.

Let $\RP$ denote the $3$-group of fiber functors of $\ok$; it is a group because $\ok$ is a Hopf category.  Observe that $\RP$ is an abelian $3$-group given by the truncated group cochain complex $$C^0(K)\rightarrow C^1(K)\rightarrow Z^2(K)$$ where we suppress the trivial coefficients in the notation.  The $2$-group $B(K)$ acts on it naturally, i.e., an $X\in B(K)$ yields an invertible quasi-trivial $\ok$ bimodule category $\vect_X$ and if $\Mm$ is a $1$-dimensional module category over $\ok$, then so is $\vect_X\boxtimes_{\ok}\Mm$.  Then $$\Ott(\ok)\simeq \RP\rtimes B(K)$$ and $Q\rightarrow \RP\rtimes B(K)$ decomposes into a $Q\rightarrow B(K)$, i.e., an extension $G$ of $Q$ by $K$ and an element of $\drr(Q,\RP)$ the abelian $3$-group of derivations \cite{der} from $Q$ to its (via $Q\rightarrow B(K)$) module  $\RP$.  This $3$-group is given by the truncated relative group cochain complex $$C^1(G;K)\rightarrow C^2(G;K)\rightarrow Z^3(G;K).$$

So up to equivalence, the quasi-trivial $Q$-extensions of $\ok$  are given by pairs $(G,\omega)$ with $G$ an extension of $Q$ by $K$ and $\omega\in H^3(G;K)$ up to isomorphism of extensions $\phi:G\simeq G'$ with $\phi^*\omega'=\omega$.

\begin{definition}
Let us say that a quasi-trivial $Q$-extension of $\ok$ given by a pair $(G,\omega)$ is \emph{based} on $G$.
\end{definition}

Explicitly, given $(G,\omega)$ the corresponding quasi-trivial $Q$-extension is $\vect^\omega_G$ where by abuse of notation we denote by $\omega$ the lift of $\omega$ to an element of $Z^3(G;K)$.  This fusion category has an obvious $Q$-grading with the identity component canonically isomorphic to $\ok$.  Another lift of $\omega$ to a cocycle would yield an isomorphic quasi-trivial $Q$-extension.  The dual $Q$-extension of $\rep(K)$ is $S^\omega_{(K,0)}(G)$ (see Section \ref{stuff2} for the notation) with its canonical inclusion of $\rep(K)$ as the identity component.  Note that up to equivalence, as $\omega$ varies over $H^3(G;K)$, these include all of the fusion categories of the type $S^\phi_{(K,\eta)}(G)\simeq S_{(K,0)}^{\phi-d\widetilde{\eta}}(G)$ (also parameterized by $H^3(G;K)$).  As expected not all of these are quasi-trivial $Q$-extensions of $\rep(K)$ as we did not restrict ourselves to the homomorphisms $Q\rightarrow \Ott(\ok,\vect)$.

For example, the boundary map $H^2(K)\rightarrow H^3(G;K)$ describes the inclusion of $S_{\widehat{K}}(G)$'s into $S^\omega_{(K,0)}(G)$'s and the $Q$-graded components of the former can be readily seen to equal $\rep^{\eta-q\cdot\eta}(K)$ where $\widehat{K}=\widehat{K}^\eta$ with $\eta\in H^2(K)$.  Thus the only quasi-trivial $Q$-extensions of $\rep(K)$ among these are the one indexed by $H^2(K)^Q/im H^2(G)$.

For a general $S^\omega_{(K,0)}(G)$, in order to understand the structure of the $q$th graded component as a left $\rep(K)$ module, we need to examine the map \begin{align*}\alpha: H^3(G;K)&\rightarrow \text{Der}(Q, H^2(K))\\\omega&\mapsto\varphi_\omega.  \end{align*}  It is obtained from the consideration of the adjoint action of $G$ on $\vect_G^\omega$, i.e., $G\rightarrow\Att(\vect_G^\omega)$ which restricts to $G\rightarrow\Att(\ok)$.  Recall that $\pi_1(\Att(\ok))\simeq H^2(K)\rtimes\text{Aut}(K)$ and the resulting map $G\rightarrow H^2(K)\rtimes\text{Aut}(K)$ descends to $Q\rightarrow H^2(K)\rtimes\text{Out}(K)$ yielding a derivation $q\mapsto\varphi_\omega(q)$.  It is not hard to see that as left $\rep(K)$ modules $$S^\omega_{(K,0)}(G)\simeq\bigoplus_{q\in Q}\rep^{\varphi_\omega(q)}(K).$$

Thus the kernel of $\alpha$ parameterizes $Q$-extensions (in our sense) of $(\rep(K),\vect)$ and so of $(\ok,\vect)$ based on a fixed $G$.  Alternatively, $\alpha$ can be understood as follows. Observe that as $Q$-modules, we have a short exact sequence of abelian $3$-groups \begin{equation}\label{rep}\Rp[1]\rightarrow\RP\rightarrow H^2(K)\end{equation} where $\Rp[1]$ is an upshifted $2$-group of $1$-dimensional representations of $K$.  This induces a long exact sequence on $\pi_i(\drr(Q,-))$'s from which we get $$\xymatrix{0\ar[r]& H^2(Q,\Rp)\ar[r] &H^3(G;K)\ar[r]^{\!\!\!\!\!\!\!\!\!\!\alpha}&\text{Der}(Q, H^2(K)).}$$  Another observation that follows from the above considerations is $$\Ott(\ok,\vect)\simeq\Rp[1]\rtimes B(K).$$

Let us examine $H^2(Q,\Rp)$ more closely.  Roughly speaking, it classifies $Q$-extensions of $(\vect_{K^*},\vect)$ with a prescribed action of $Q$. More precisely, $Q$ acts on $\Rp=\vect_{K^*}^\times$ via $G$, i.e., $q\cdot\ell=G_q\times_K\ell$.   A manageable description of this data is extracted from applying the group cohomology (of $Q$) functor to the short exact sequence of $Q$-modules
$$\C^\times[1]\rightarrow\Rp\rightarrow K^*$$ obtaining \begin{equation}\label{map}H^1(Q,K^*)\rightarrow H^3(Q)\rightarrow H^2(Q,\Rp)\rightarrow H^2(Q,K^*)\rightarrow H^4(Q).\end{equation}  On the other hand, by applying $Q$-cohomology to \eqref{rep} we obtain: $$H^2(G)\rightarrow H^2(K)^Q\rightarrow H^2(Q,\Rp)\rightarrow H^3_K(G)\rightarrow H^1(Q, H^2(K))$$ where $H^3_K(G)$ is the kernel of $H^3(G)\rightarrow H^3(K)$. In fact we have the following exact commutative diagram illustrating the relationship between $S_{\widehat{K}}(G)$'s, $S^\omega_{(K,0)}(G)$'s and those among them that are quasi-trivial $Q$-extensions of $\rep(K)$:
$$
\xymatrix{
&0\ar[d]&0\ar[d]&0\ar[d]&\\
0\ar[r]&\dfrac{H^2(K)^Q}{im H^2(G)}\ar[r]\ar[d]& H^2(Q,\Rp)\ar[r]\ar[d]&\text{ker}(\beta)\ar[r]\ar[d]&0\\
0\ar[r]&\dfrac{H^2(K)}{im H^2(G)}\ar[r]\ar[d]& H^3(G;K)\ar[r]\ar[d]^\alpha&H^3_K(G)\ar[r]\ar[d]^\beta&0\\
0\ar[r]&\dfrac{H^2(K)}{H^2(K)^Q}\ar[r]\ar[d]& \text{Der}(Q, H^2(K))\ar[r]&H^1(Q, H^2(K))\ar[r]&0\\
&0&&&\\
}
$$

Yet another point of view is the most direct interpretation of the notation itself, namely consider $H^2(Q,\Rp)$ as the group of equivalence classes of $2$-cocycles, i.e., collections of $\ell_{q,q'}\in\Rp$ for all pairs $q,q'\in Q$ together with isomorphisms in $\Rp$: $$\varphi_{q,q',q''}:(G_q\times_K\ell_{q',q''})\otimes\ell_{qq',q''}^*\otimes\ell_{q,q'q''}\otimes\ell^*_{q,q'}\simeq\C$$ satisfying a twisted pentagon axiom.  We can then form an explicit $Q$-extension of $(\rep(K),\vect)$ based on $G$ consisting of pairs $(V,q)$ with  $V\in\rep(K)$ and $q\in Q$ and the fusion product \begin{equation}\label{fusion}(V,q)\star(W,q')=(V\otimes(G_q\times_K W)\otimes\ell_{q,q'},qq').\end{equation}

\begin{remark}
Note that the $Q$-extension described by \eqref{fusion} is dual to the  $Q$-extension of $(\ok,\vect)$ based on $G$ given by $\vect^\omega_G$ with $\omega\in H^3(G;K)$ satisfying $\alpha(\omega)=0$, under the identification of $\omega$ with $\ell_{q,q'}$'s.  It is simply $S^\omega_{(K,0)}(G)$.  Observe that by Proposition \ref{brpic} and \eqref{fusion} all $Q$-extensions of $(\rep(K),\vect)$ arise from the  $Q$-extensions of $(\rep(K_{ab}),\vect)$ equipped with a lifting of $Q\rightarrow B(K_{ab})$ to $Q\rightarrow B(K)$.
\end{remark}

For a fixed action $\rho$ of $Q$ on an abelian group $A$, a $Q$-extension of $(\vect_A,\vect)$ with band $\rho$ (see Remark \ref{band}) is given by an $\eta\in H^2(Q,A)$ specifying an extension $G_\eta$ (the base) and a $\omega\in H^3(G_\eta;A)$ with $\alpha(\omega)=0$, i.e., a pointed fusion category $\vect^\omega_{G_\eta}$.  Its dual is a $Q$-extension of $(\vect_{A^*},\vect)$ with band $\rho^*$.  More precisely, it is $S^\omega_{A,0}(G_\eta)$ just as before, but the latter is now also a pointed fusion category $\vect^{\omega^*}_{G_{\eta^*}}$.  Indeed, by the formula \eqref{fusion}, the fusion product is $$(\chi,q)\star(\chi',q')=(\chi\otimes(G_\eta)_q\times_A\chi'\otimes\ell^\omega_{q,q'},qq')$$ so that $\eta^*$ is the image in $H^2(Q,A^*)$ of $\omega\in H^2(Q,\rep^\times(A))$ (see \eqref{map}).  To summarize: $$(\vect^\omega_{G_\eta})^{*\vect_Q}\simeq\vect^{\omega^*}_{G_{\eta^*}}$$ which extends the formula \eqref{dual} to a complete duality.

\section{A curious symmetry}\label{symmetry}
Here we examine $Q$ actions by tensor auto-equivalences on $\ok$.  These can be understood as $\C^\times$-central extensions of a double group constructed out of $Q$, $K$, and an action of the former on the latter.  This point of view reveals an interesting symmetry between $Q$ actions on $\ok$ and $K$ actions on $\vect_Q$.

Recall that the $2$-group $\Att(\rep(K))$ is somewhat mysterious as in addition to the usual $K$-bitorsors, it contains their quantum analogues: $K$-bi-Galois algebras (see \cite{twistedauthopf} for example).  Its dual problem on the other hand, namely the consideration of $\Att(\ok)$ is simpler.  More precisely, we have: $$\Att(\ok)\simeq \text{Ext}(K)\rtimes Aut(K),$$ where $\text{Ext}(K)$ denotes the abelian $2$-group of $\C^\times$-central extensions of $K$. More concretely, it is given by the truncated group cohomology complex $$C^1(K)\rightarrow Z^2(K),$$ so that $$\text{Aut}^\otimes(\ok)\simeq H^2(K)\rtimes Aut(K).$$

Note that the homomorphisms $Q\rightarrow \Att(\ok)$ have a curious interpretation, namely for a fixed $\varphi:Q \rightarrow \text{Aut}(K)$ defining an action of $Q$ on $K$, homomorphisms that project to $\varphi$ are parameterized (up to equivalence) by $H^2(D(Q,K,\varphi))$ where $D(Q,K,\varphi)$ is the double group (see for example \cite{doublegroupoids} for the definitions) associated to $\varphi$.

In particular if we focus on $Q\rightarrow \Att_{\,0}(\ok)$, i.e., maps into the subgroup of auto-equivalences that preserve the isomorphism classes of objects in $\ok$, so that $\varphi$ is trivial, then we see that $$Hom(Q,\Att_{\,0}(\ok))\simeq Hom(K,\Att_{\,0}(\vect_Q))$$ so that actions of $Q$ on $\ok$ that fix isomorphism classes are in bijection with actions of $K$ on $\vect_Q$   that fix isomorphism classes.  More explicitly, either action consists of the same data (from different perspectives) of functions $$\omega:Q\times K\times K\rightarrow\C^\times,\quad\eta:K\times Q\times Q\rightarrow\C^\times$$ subject to the conditions $$\partial_q \omega =\partial_k\eta,\quad \partial_k\omega=0, \quad\partial_q\eta=0$$ where $\partial_q$ and $\partial_k$ are group cohomology differentials.

Observe that if both $H^2(Q)$ and $H^2(K)$ vanish, then the correspondence asserts that $$H^2(Q,K^*)\simeq H^2(K,Q^*).$$  This also follows from the universal coefficients theorem whereby the vanishing of $H^2$ establishes $H^2(Q,K^*)\simeq Ext^1(Q_{ab},K^*)$ and $H^2(K,Q^*)\simeq Ext^1(K_{ab},Q^*)$, and the ext-groups are isomorphic by the usual Pontryagin duality. Thus the correspondence can be seen as its ``generalization" to the case when $H^2$ does not vanish.

\begin{remark}
Let $\rho\in Hom(Q,\Att_{\,0}(\ok))$ and $r\in Hom(K,\Att_{\,0}(\vect_Q))$ correspond under the above, and note that the band of the latter is $\pi_1(r):K\rightarrow H^2(Q)$, then we have $$(\ok)^Q\simeq\bigoplus_{k\in K}\rep^{\pi_1(r)(k)}(Q)$$ as $\rep(Q)$-modules.
\end{remark}

\section{Appendix}\label{appx}

\subsection{Clifford-Mackey theory}\label{stuff0}
Let $G$ be a finite group with a normal subgroup $K$.  Denote the quotient by $Q$.  Observe that for $q\in Q$, the fiber $G_q$ in $G$ over $q$ is a $K$-bitorsor, i.e., it has two commuting $K$-actions, one left, one right, such that both are simply transitive. Bitorsors form a $2$-group that can be presented as a crossed product $B(K)=[K\rightarrow\text{Aut}(K)]$.  Their usefulness to us stems from the tensor compatible action   of $B(K)$ on $\rep(K)$ as follows: for an $X\in B(K)$ and $V\in \rep(K)$ we have $$X\cdot V = X\times_K V$$ where the latter is by definition $$\dfrac{X\times V}{(xk,v)\sim(x,kv)}.$$  Thus we have a tensor compatible action of $Q$ on $\rep(K)$ obtained from $K\lhd G$. Let us make the situation even more manageable: if $I_K$ denotes the set of isomorphism classes of irreducible representations of $K$ on complex vector spaces, then $Q$ acts on $I_K$.  Furthermore, if we choose a representative $V_i$ for every $i\in I_K$, and let $$\ell_{q,i}=\Hom_K(V_{q\cdot i},G_q\times_K V_i)$$ then the action groupoid $Q\act I_K$ has a ``decoration" of lines, with $\ell_{q,i}$ assigned to the arrow $(q,i)$ from $i$ to $q\cdot i$.  We have the usual associative composition isomorphisms that give the data of an $S^1$-gerbe on  $Q\act I_K$.  This is a special case of the situation considered in \cite{gerbes}.  It is a reinterpretation in the language of gerbes of the Clifford-Mackey theory.

\subsection{Some fusion categories and their modules}\label{stuff1}
Let $G$ be a finite group with an arbitrary subgroup $K$. We have a fusion category $S_K(G)$ that consists of $K$-bi-equivariant finite dimensional $G$-graded vector spaces over $\C$.  More precisely, an element $M\in S_K(G)$ is graded $M=\bigoplus_{g\in G}M_g$ and equipped with isomorphisms $k\cdot:M_g\rightarrow M_{kg}$ and $\cdot k:M_g\rightarrow M_{gk}$ subject to the obvious compatibility conditions.  The convolution product is defined by $$(M\star N)_g=\left(\bigoplus_{g_1 g_2=g}M_{g_1}\otimes N_{g_2}\right)^K$$ where the $k:M_{g_1}\otimes N_{g_2}\rightarrow M_{g_1 k^{-1}}\otimes N_{kg_2}$. Alternatively, $S_K(G)$ is the category of $G$-equivariant (with respect to the diagonal action) finite dimensional $G/K\times G/K$-graded vector spaces.  As special cases we obtain both $\vect_G$ (the $G$-graded vector spaces) and $\rep(G)$ (the finite dimensional representations of $G$) from $K=1$ and $K=G$ respectively.

There is an alternative description \cite{drinfelddouble} of these objects in terms of modules and bimodules in fusion categories.  More precisely, $\vect_{G/K}$ is a $\vect_G$-module and as such $\vect_{G/K}\simeq\Mod_{\vect_G}\C K$.  Furthermore, one checks that $S_K(G)\simeq\text{Bimod}_{\vect_G}\C K$.

If we suppose that $K$ is a normal subgroup with the quotient group $Q$, then $$S_K(G)=\rep(K)\rtimes Q$$ more precisely, for $V,W\in\rep(K)$ and $q,q'\in Q$ we have $$(V,q)\otimes(W,q')=(V\otimes q\cdot W,qq').$$  The action of $Q$ on $\rep(K)$ is via the bitorsors $G_q$, as above.  In particular, $\vect_Q$ is a full fusion subcategory of $S_K(G)$ in this case.

Assume that $K=A$ is an abelian normal subgroup then let $\omega\in H^2(Q,A)$ be the cohomology class classifying the extension $$A\rightarrow G\rightarrow Q.$$  Namely, if we choose a set theoretic section $s$ of the extension and define $$f(q,q')=s(q)s(q')s(qq')^{-1}$$ then $f$ is a $2$-cocycle representing $\omega$.  Moreover, $Q$ acts on $A^*$ (the characters of $A$) on the right and so we can form a semi-direct product $Q\ltimes A^*$.  We have a map \begin{align*}C^2(Q,A)&\rightarrow C^3(Q\ltimes A^*,\C^{\times})\\f&\mapsto\phi_f \end{align*} where $$\phi_f((q,\chi),(q',\chi'),(q'',\chi''))=\chi(f(q',q'')).$$  One readily checks that it sends cocycles to cocycles and we get $$S_A(G)\simeq\vect^{\phi_f}_{Q\ltimes A^*}$$ where the latter is the fusion category of $Q\ltimes A^*$-graded vector spaces with the associator twisted by the $3$-cocycle $\phi_f$.

\subsection{A very twisted case}\label{stuff2}
Let $\omega\in Z^3(G,\C^\times)$; for construction purposes we need a $3$-cocycle, however up to equivalence only the cohomology class of $\omega$ will matter.  As in \cite{eno1} for example, we can construct a fusion category $\vect_G^\omega$ which is almost  $\vect_G$, but with the associator twisted by $\omega$.  More precisely we still have that $\delta_g\otimes\delta_{g'}=\delta_{gg'}$, but the associator $\delta_g\otimes(\delta_{g'}\otimes\delta_{g''})\rightarrow (\delta_g\otimes\delta_{g'})\otimes\delta_{g''}$ is not $Id_{\delta_{gg'g''}}$, but rather $\omega(g,g',g'')Id_{\delta_{gg'g''}}$.

Suppose that $K$ is a subgroup of $G$ equipped with a cochain $\eta\in C^2(K,\C^\times)$ such that $$d\eta=\omega|_K.$$  One may construct an embedding $$\eta:\vect_K\rightarrow \vect_G^\omega$$ by using $\eta$ to modify the fusion compatibility isomorphism. Namely, $\eta(\delta_k)=\delta_k$, but $\eta(\delta_k)\otimes \eta(\delta_{k'})\rightarrow\eta(\delta_{kk'})$ is not $Id_{\delta_{kk'}}$, but $\eta(k,k')Id_{\delta_{kk'}}$.

Though it is possible to proceed more geometrically as we have done above, it becomes increasingly more unwieldy.  Thus we proceed algebraically.  Note that if we modify the multiplication in the group algebra $\C K$, by $\eta$, i.e., we set $k\cdot k'=\eta(k,k')kk'$ then the resulting algebra $\C_\eta K$ is not associative in $\vect$ nor $\vect_G$.  However it is easy to check that $\C_\eta K$ is an associative algebra in $\vect_G^\omega$.

\begin{definition}
Let $$\vect_{G/K}^\eta=\Mod_{\vect_G^\omega}\C_\eta K,$$   $$S^\omega_{(K,\eta)}(G)=\Bimod_{\vect_G^\omega}(\C_\eta K),$$ $$S^\omega_{(K,\eta),(T,\nu)}(G)=\Bimod_{\vect_G^\omega}(\C_\eta K,\C_\nu T),$$ where $T$ is a subgroup of $G$ together with a cochain $\nu\in C^2(T,\C^\times)$ such that $d\nu=\omega|_T$.
\end{definition}

\bigskip
\noindent{\bf Acknowledgments.} We would like to thank Dmitri Nikshych for patiently answering our questions.

\bibliography{fusion}{}
\bibliographystyle{plain}

\medskip
\noindent Department of Mathematics and Statistics,
University of Windsor, 401 Sunset Avenue, Windsor, Ontario N9B 3P4, Canada

\noindent\emph{E-mail address}:
\textbf{ishapiro@uwindsor.ca}

\end{document}